\theoremstyle{plain}
\newtheorem{theorem}{Theorem}
\title{Quasi-triangle inequality for absolute correlation distance}
\author{Stanislav Dubrovskiy}
\email{duuubr@gmail.com}
\begin{document}

\maketitle

\begin{abstract}
We show that absolute correlation distance satisfies a $K$-relaxed triangle inequality, with the best $K=2$. 
\end{abstract}

\thispagestyle{empty}
High correlation is not a transitive relation between datasets (considered as vectors in $\mathbb{R}^n$): $u$ correlated with $v$ and $w$, does not in general imply $v$ and $w$ are correlated . For distances determined by a metric, there is the triangle inequality to quantify this. Correlation too gives rise to a distance - the absolute correlation distance, $d_{|corr|}$, which does not satisfy the triangle inequality (cf. \cite{DL}). See T. Tao's blog \cite{T} for discussion, examples and results. We show that it nevertheless satisfies a $K$-relaxed triangle inequality, with $K=2$ as the best constant (that is any $K<2$ fails).

Absolute correlation distance between two samples $X$ and $Y$ in $\mathbb{R}^n$ is given as 
\begin{equation}
d_{|corr|}(X,Y)=1-\left|\frac{\langle X-\bar{X},\, Y-\bar{Y} \rangle}{ \lVert X-\bar{X} \rVert \cdot \lVert Y-\bar{Y} \rVert } \right| .
\end{equation}
Here $\bar{X}$ is the arithmetic mean of $X$, and $ \lVert X \rVert$ - its euclidean norm.
While useful as evidence of linear relation between $X$ and $Y$, $d_{|corr|}$ is not a metric, in particular it does not satisfy triangle inequality. Take for example three vectors (of zero mean) forming angles of $\pi/2$, $\pi/4$, and $\pi/4$ (possible in dimensions $n\geq 3$): 
\begin{equation}
 \label{exmpl} 
1-|\cos\pi/2| \nleq 1-|\cos\pi/4| + 1- |\cos\pi/4|\ .
\end{equation}
In lieu of triangle inequality, an estimate of the type
\begin{equation}
\label{qT}
d_{|corr|}(X,Z)\leq K(d_{|corr|}(X,Y)+d_{|corr|}(Y,Z)),
\end{equation} 
for some $K$ (a $K$-relaxed triangle inequality), if true, would be useful as a measure of transitivity of correlation closeness. 

The above example (\ref{exmpl}) implies $K\geq \frac{1}{2-\sqrt 2}$, which is not far from the best possible $K=2$.
\begin{theorem}[Quasi-triangle inequality]
\label{qTLemma}
Absolute correlation distance $d_{|corr|}$ satisfies a $K$-relaxed triangle inequality, with the best $K=2$. Namely, for $X, Y, Z \in \mathbb{R}^n$:
\begin{equation}
\label{qTL}
d_{|corr|}(X,Z)\leq K(d_{|corr|}(X,Y)+d_{|corr|}(Y,Z))
\end{equation} 
for $K \geq 2$. The inequality fails (at least for some $X, Y, Z \in \mathbb{R}^n$) for $K<2$.
\end{theorem}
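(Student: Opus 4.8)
The plan is to translate the statement into the geometry of angles between the centered vectors and then reduce it to a one-variable trigonometric estimate. Set $x=X-\bar X$, $y=Y-\bar Y$, $z=Z-\bar Z$; then $d_{|corr|}$ depends only on the absolute cosines of the angles between these vectors, so I would record the acute angles $\alpha,\beta,\gamma\in[0,\pi/2]$ between the lines $\mathbb{R}x,\mathbb{R}y,\mathbb{R}z$ (with $\cos\alpha=|\cos\angle(x,y)|$, and likewise for the other pairs). Writing $f(\theta)=1-\cos\theta$, we have $d_{|corr|}(X,Y)=f(\alpha)$ and so on, so the target \eqref{qTL} with $K=2$ is exactly $f(\gamma)\le 2\big(f(\alpha)+f(\beta)\big)$.

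The one genuinely geometric input, and the step I expect to be the crux, is the claim that the acute angle between lines obeys a triangle inequality, $\gamma\le\alpha+\beta$. I would prove this from the spherical triangle inequality: pick unit representatives $u,v,w$ and flip their signs so that $\langle u,v\rangle\ge0$ and $\langle v,w\rangle\ge0$, whence the spherical distances give $d_S(u,w)\le d_S(u,v)+d_S(v,w)=\alpha+\beta$, while $\gamma=\min\big(d_S(u,w),\pi-d_S(u,w)\big)\le d_S(u,w)$. Together with $\gamma\le\pi/2$ this yields $\gamma\le\min(\alpha+\beta,\pi/2)$.

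Since $f$ is nondecreasing on $[0,\pi/2]$, it then suffices to check $f\big(\min(\alpha+\beta,\pi/2)\big)\le 2\big(f(\alpha)+f(\beta)\big)$, which I would split into two routine cases. When $\alpha+\beta\ge\pi/2$ the left-hand side equals $1$ and the inequality reduces to $\cos\alpha+\cos\beta\le\tfrac32$, which holds because $\cos\alpha+\cos\beta=2\cos\tfrac{\alpha+\beta}{2}\cos\tfrac{\alpha-\beta}{2}\le 2\cos\tfrac\pi4=\sqrt2$. When $\alpha+\beta\le\pi/2$, putting $p=\tfrac{\alpha+\beta}{2}$ and $q=\tfrac{\alpha-\beta}{2}$ and applying half-angle identities reduces it to $\sin^2p\le 2-2\cos p\cos q$, which follows from $\cos q\le1$ together with the factorization $\sin^2p=(1-\cos p)(1+\cos p)\le 2(1-\cos p)$.

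For optimality I would produce a family approaching the ratio $2$: take three unit vectors in a common plane at angles $0,\epsilon,2\epsilon$, placed inside the zero-mean hyperplane (which has dimension at least $2$ once $n\ge3$), so that $\alpha=\beta=\epsilon$ and $\gamma=2\epsilon$. Then $\dfrac{d_{|corr|}(X,Z)}{d_{|corr|}(X,Y)+d_{|corr|}(Y,Z)}=\dfrac{1-\cos 2\epsilon}{2(1-\cos\epsilon)}=2\cos^2\tfrac\epsilon2$, which is strictly below $2$ for $\epsilon>0$ but tends to $2$ as $\epsilon\to0^+$; hence no $K<2$ can work. The factor $2$ is forced precisely because $f(\theta)\sim\theta^2/2$ is quadratic at the origin, so along a geodesic doubling the angle multiplies $f$ by four while the right-hand side only doubles.
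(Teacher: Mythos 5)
Your proof is correct, and it takes a genuinely different route from the paper's. The paper keeps the full angles $\alpha,\beta,\gamma\in[0,\pi]$ between the centered vectors, uses the spherical triangle inequality (\ref{tiRG}), and then handles the absolute values in $1-|\cos(\cdot)|$ by covering $[0,\pi]^2$ with four regions, minimizing the right-hand side along level lines of the left-hand side, and exploiting reflection symmetries (including the substitution $Y\mapsto -Y$) to dispose of the remaining regions. You instead quotient out the signs at the start: you work with the acute angles between the lines $\mathbb{R}x,\mathbb{R}y,\mathbb{R}z$, and your key lemma --- that these acute angles themselves satisfy $\gamma\le\alpha+\beta$, i.e.\ that angular distance on projective space is a metric, proved by sign-normalizing $u,w$ against $v$ and then applying the spherical triangle inequality --- is precisely the structural fact that the paper's four-region case analysis is implicitly re-deriving. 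Once all three angles lie in $[0,\pi/2]$, where $f(\theta)=1-\cos\theta$ is monotone, the problem collapses to the two elementary cases $\alpha+\beta\ge\pi/2$ (where $\cos\alpha+\cos\beta\le\sqrt2<\tfrac32$ suffices) and $\alpha+\beta\le\pi/2$ (the half-angle factorization, where you correctly use $\cos p\ge 0$ for $p\le\pi/4$), and both computations check out. Your sharpness example is essentially the paper's ($\alpha=\beta=\epsilon$, $\gamma=2\epsilon$, ratio $2\cos^2(\epsilon/2)\to 2$), and your closing remark that the constant $2$ is forced by the quadratic vanishing of $f$ at $0$ is a nice explanation the paper omits. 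What your approach buys is brevity and a clear identification of where the real content lives (the metric property of line angles rather than the trigonometric bookkeeping); the paper's approach avoids introducing the projective reduction but pays for it with the regional case analysis.
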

\begin{proof}
Without loss of generality, we can assume $X, Y, Z \in \mathbb{S}^{n-1}$ (a unit sphere in $\mathbb{R}^n$). 
In fact we can assume that $X, Y, Z \in \mathbb{S}^{n-2}=\mathbb{S}^{n-1} \cap  \Pi_{0}$, where $\Pi_{0}$ is a hyperplane of zero mean vectors, 
defined by its normal: ${\bf n}=(1, 1, \ldots, 1)$.
Then $d_{|corr|}(X,Y) =  1-|\cos\widehat{XY} |$, and (\ref{qTL}) can be rewritten as:
$$1-|\cos\widehat{XZ}\, | \leq K(1-|\cos\widehat{XY} |+1-|\cos\widehat{YZ}|)\ .$$
To simplify notation let us use $\gamma=\widehat{XZ}$, $\alpha=\widehat{XY}$ and $\beta=\widehat{YZ}$.
Considering angles (in radian measure) as their corresponding big circle arcs on the unit sphere, we can make use of the triangle inequality from Riemann geometry on $\mathbb{S}^{n-2}$: 
\begin{equation}
\label{tiRG} 
|\alpha-\beta|\leq\gamma\leq \alpha+\beta. 
\end{equation}

Call the left side of (\ref{qTL}) $f(\gamma)$, and its right hand side $g(\alpha,\beta)$:
\begin{equation*}
f(\gamma)=1-|\cos\gamma|,\quad g(\alpha,\beta)=K(1-|\cos\alpha|+1-|\cos\beta|).
\end{equation*}
First assume $\gamma= \alpha+\beta$. Since $0 \leq \alpha, \beta \leq \pi$ it would suffice to establish the inequality on $I=[0,\pi] \times [0,\pi]$. 
Cover $I$ with four regions: 
$$R_1=[0,\pi/2] \times [0,\pi/2],  R_2=[\pi/2,\pi] \times [0,\pi/2], R_3=[0,\pi/2] \times [\pi/2,\pi],$$ 
$\text{ and } R_4=[\pi/2,\pi] \times [\pi/2,\pi].$
It is enough to show 
\begin{equation}
\label{fqTi} 
f(\alpha+\beta) \leq g(\alpha,\beta)
\end{equation}
 in each of these regions. 

Begin with $R_1$. Since the left side of (\ref{fqTi}) has lines $\alpha+\beta=C$ (constant) as level curves, all we need to show is that minimum of $g$, restricted to each $f$-level curve, is not less than the value of $f$ on it:
\begin{equation}
\label{cts} 
f(C) \leq \min_{\alpha+\beta=C}g(\alpha,\beta)
\end{equation}
If $\beta=C-\alpha$ (on the level line), we see that $$g'_\alpha=K(\sin\alpha-\sin\beta), $$ and the minimum is achieved when $\alpha=\beta$. Thus we need to compare 
\begin{equation*}G(\alpha)=\left.g\right|_{\alpha=\beta}=2K(1-\cos\alpha)
\end{equation*}
with 
\begin{equation*}
F(\alpha)=\left.f\right|_{\alpha=\beta}=
\left[
\begin{array}{ll}
1-\cos{2\alpha}=2(1-\cos{\alpha})(1+\cos{\alpha}) & \text{if } 0 \leq \alpha \leq \pi/4,\\ 
1+\cos{2\alpha}\leq 1 & \text{if } \pi/4 \leq \alpha\leq \pi/2.\ 
\end{array}
\right.
\end{equation*} 
While $\alpha \leq \pi/4,\ (1+\cos{\alpha})\leq K$, and the inequality holds. At $\pi/4$ we check that 
\begin{equation}
\label{midway}
G(\pi/4)=K(2-\sqrt{2})>1,
\end{equation}
and increasing from there, while $F(\alpha)\leq 1$, thus $F(\alpha)\leq G(\alpha)$ on $R_1$ when $K\geq 2$. 

Let us now consider $R_2$ and again, behavior of $g$ on level lines of $f$. Since $$\left.g'_\alpha\right|_{\alpha+\beta=C}=-K(\sin\alpha+\sin\beta)< 0$$ we see that $g$ is strictly decreasing on any level line throughout $R_2$ (the only 0 of derivative is achieved at $(\pi,0)$, on the boundary). However let us compare the values of the two functions on two edges of the region: $R_{2}^{bottom}=[\pi/2,\pi] \times 0$ and $R_{2}^{right}= \pi \times [0,\pi/2]$. On the bottom $f$ and $g$ are identical up to a multiple of $K$, so inequality holds. On the right edge, $\left.g\right|_{R_{2}^{right}}=K(1-\cos\beta)$, while $\left.f\right|_{R_{2}^{right}}=1-|\cos(\pi+\beta)|=1-\cos\beta\leq K(1-\cos\beta)$. This amounts to (\ref{fqTi}) holding on right ends of all level line segments in $R_2$, while $g$ stays above and $f$ the same as their respective right end values throughout  the level line segments.

While the cases of $R_3$ and $R_4$ are analogous to $R_2$ and $R_1$ respectively, they are in fact unnecessary to consider as we shall shortly discover. 

We would like now to remove assumption $\gamma=\alpha+\beta$. This is easy when $\alpha,\beta\in R_{1}$. We cover $R_{1}$ with lower left corner region, $R_{1}^{\llcorner}=\{\alpha+\beta\leq\pi/2\}$, and upper right corner one, $R_{1}^{\urcorner}=\overline{R_{1}\setminus R_{1}^{\llcorner}}$. Since $f(\gamma)$ increases on $R_{1}^{\llcorner}$ (while $\gamma \in [0,\pi/2]$), we can use (\ref{tiRG}) and (\ref{fqTi}) to obtain: $f(\gamma)\leq f(\alpha+\beta)\leq g(\alpha,\beta)$. When $\alpha,\beta\in\hspace*{-0.0em}R_{1}^{\hspace*{-0.0em}\urcorner}$ we have 
\begin{equation}
\label{abve1}
\left.g\right|_{R_{1}^{\urcorner}}>1
\end{equation}
according to (\ref{midway}), while $f\leq 1$ always.

Similar to $R_{1}$ above, $R_{2}$ splits into $R_{2}^{\ulcorner}=\{\alpha-\beta\leq\pi/2\}$ and $R_{2}^{\lrcorner}=\overline{R_{2}\setminus R_{2}^{\ulcorner}}$. We use the fact that $g$ is symmetric against the line $\alpha=\pi/2$. Thus $\left.g\right|_{R_{2}^{\ulcorner}}>1$ due to (\ref{abve1}), and the inequality holds on $R_{2}^{\ulcorner}$. For $\alpha,\beta\in R_{2}^{\lrcorner}$ (the mirror image of $R_{1}^{\llcorner}$) we notice that 
\begin{equation}
\label{mirror_chain}
g(\alpha,\beta)=g(\pi-\alpha,\beta)\geq f(\pi-\alpha+\beta)=f(\alpha-\beta),
\end{equation}
with the first equality due to $g$-symmetry against $(\alpha,\beta) \mapsto (\pi-\alpha,\beta)$, the last one due to 
$f$-symmetry against $\gamma \mapsto \pi-\gamma$, and the middle inequality due to (\ref{fqTi}).
However $\alpha-\beta\geq \pi/2$ on $R_{2}^{\lrcorner}$, so together with (\ref{tiRG}) we obtain:
\begin{equation}
\label{range}
\pi/2 \leq \alpha-\beta \leq \gamma \leq \pi,
\end{equation}
as $\gamma$ also represents an angle between two vectors. Couple (\ref{range}) with the fact that $f$ decreases on $[\pi/2,\pi]$, and we can extend (\ref{mirror_chain}) as follows:
\begin{equation}
\label{kill}
f(\alpha-\beta)\geq f(\gamma).
\end{equation}
Combining (\ref{mirror_chain}) with (\ref{kill}) we obtain the desired result on $R_{2}$.

$R_3$ and $R_4$ can be treated via the following trick. Consider $-Y$ instead of $Y$ in Theorem 
\ref{qTLemma}. 
Angle $\gamma$ will not change, while $(\alpha,\beta) \mapsto (A,B)=(\pi-\alpha,\pi-\beta)$. 
Triangle inequality must hold with the new angles too: 
\begin{equation}
\label{tiRG2} 
|A-B|\leq\gamma\leq A+B. 
\end{equation}
Thus for $(\alpha,\beta) \in R_3 \text{ or } R_4$ the corresponding $(A,B)$, in $R_2$ or $R_1$ respectively, will satisfy 
\begin{equation*}
f(\gamma)\leq g(A,B)
\end{equation*}
with the same $\gamma$. Also $g$ is symmetric with respect to maps $x \mapsto \pi-x$ in either argument, hence $g(A,B)=g(\alpha,\beta)$, and we are done with $R_3$, $R_4$.

To prove sharpness, let us consider the case when $\gamma=\alpha+\beta$, and $\alpha=\beta$. Then dividing the left side of (\ref{qTL}) by its right side, and taking a limit as $\alpha \to 0$ we have: $$\lim_{\alpha \to 0}\left(\frac{1-\cos{2\alpha}}{2-2\cos\alpha}\right) \leq K \iff K\geq \lim_{\alpha \to 0}\left(\frac{2-2\cos^2{\alpha}}{2-2\cos\alpha}\right)=2$$ 
Thus $K$ cannot be less than 2 and our estimate is sharp.
\end{proof}


\begin{thebibliography}{99}

\bibitem[DL]{DL} Deza, Michel Marie; Laurent, Monique (1997), Geometry of Cuts and Metrics, \textit{Algorithms and Combinatorics}, {\bf 15}, Springer-Verlag, Berlin, p.27.

\bibitem[T]{T} Tao, T. (2014, June 5). When is correlation transitive? [Blog post]. Retrieved from https://terrytao.wordpress.com/2014/06/05/when-is-correlation-transitive/

\end{thebibliography}
\end{document}